\documentclass[a4paper,12pt]{article}
\usepackage[centertags]{amsmath}
\usepackage{amsfonts}
\usepackage{amssymb}
\usepackage{amsthm}
\usepackage{dsfont}
\usepackage{tikz, subfigure}
\usepackage{parskip}

\addtolength{\topmargin}{-2cm} \addtolength{\oddsidemargin}{-1cm}
\addtolength{\textheight}{4cm} \addtolength{\textwidth}{2cm}
\setlength{\parskip}{0.3cm} \setlength{\parindent}{0cm}
\newtheorem{theorem}{Theorem}[section]
\newtheorem{lemma}{Lemma}[section]

\newtheorem{defn}{Definition}[section]

\begin{document}

\title{Digit Frequencies and Bernoulli Convolutions}
\author{Tom Kempton}
\maketitle

\begin{abstract}\noindent It is well known that the Bernoulli convolution $\nu_{\beta}$ associated to the golden mean has Hausdorff dimension less than $1$, i.e. that there exists a set $A$ with $\nu_{\beta}(A)=1$ and $dim_H(A)<1$. We construct such a set $A$ explicitly and discuss how our approach might be generalised to prove the singularity of other Bernoulli convolutions\end{abstract}
\section{Introduction}
Bernoulli convolutions are a simple class of fractal measures. Given $\beta\in(1,2)$, we let $\pi:\{0,1\}^{\mathbb N}\to I_{\beta}:=\left[0,\frac{1}{\beta-1}\right]$ be given by
\[
\pi(\underline a)=\sum_{i=1}^{\infty}a_i\beta^{-i}.
\]
We let $m$ be the $\left(\frac{1}{2},\frac{1}{2}\right)$ Bernoulli measure on $\{0,1\}^{\mathbb N}$ and define the Bernoulli convolution $\nu_{\beta}$\footnote{Unfortunately, researchers in the closely related fields of Bernoulli convolutions and $\beta$-transformations tend to use different notation. We adopt the language of $\beta$-transformations, Bernoulli convolutions are often denoted $\nu_{\lambda}$ where $\lambda$ corresponds to $\frac{1}{\beta}$ in our language.} by
\[
\nu_{\beta}=m\circ\pi^{-1}.
\]

It is a major open problem to determine the values of $\beta$ for which $\nu_{\beta}$ is absolutely continuous. Erd\H{o}s proved in \cite{ErdosPisot} that if $\beta\in(1,2)$ is a Pisot number then the Bernoulli convolution $\nu_{\beta}$ is singular, while in \cite{GarsiaAC} Garsia provided a countable number of values of $\beta$ for which $\nu_{\beta}$ is absolutely continuous. Solomyak proved in \cite{SolomyakAC} that for almost every $\beta\in(1,2)$ the corresponding Bernoulli convolution is absolutely continuous, yet for typical given $\beta$ the problem of determining whether $\nu_{\beta}$ is absolutely continuous remains open. 

The case that $\beta$ is equal to the golden mean is of particular interest, with extensive investigations into questions relating to entropy, Hausdorff dimension, Gibbs properties and multifractal analysis of the corresponding Bernoulli convolution, see for example \cite{AlexanderZagier,HuTian,LedrappierPorzio1,LedrappierPorzio2,OlivierSidorovThomas,SidorovVershik}. Given that $\nu_{\beta}$ is totally singular, it seems that one of the most fundamental question associated to $\nu_{\beta}$ is to understand where it is supported, can one describe a set of zero Lebesgue measure but with full $\nu_{\beta}$-measure? 

In this note we answer the above question by giving an example of a set $A$ of Hausdorff dimension less than one but with $\nu_{\beta}(A)=1$. Our method uses digit frequencies for the greedy $\beta$-transformation and provides a new elementary proof of the singularity of the golden mean Bernoulli convolution. We also hope that, by describing the manner of the singularity of $\nu_{\beta}$ for $\beta$ equal to the golden mean, we may be able to shed some light on the question of which other Bernoulli convolutions are singular. We comment on the feasibility of extending our method to other Bernoulli convolutions in the final section.

\subsection{The Greedy $\beta$-transformation}
Let $\beta=\frac{1+\sqrt{5}}{2}$ be the golden mean. In \cite{Renyi}, Renyi introduced the greedy $\beta$-transformation $T:I_{\beta}\to I_{\beta}$ defined by
\[
T(x):=\left\lbrace\begin{array}{cc}
                  \beta x & x\in[0,\frac{1}{\beta})\\
\beta x-1& x\in[\frac{1}{\beta},\frac{1}{\beta-1}]
                  \end{array}\right. .
\]
The map $T$ has $[0,1]$ as an attractor, and $T(x)|_{[0,1]}=\beta x$ (mod 1). In \cite{Parry} it was proved that T preserves the absolutely continuous probability measure $\mu$ with density given by
\[
\rho(x)=C\sum_{i=0}^{\infty} \frac{1}{\beta^i}\chi_{[0,T^i(1)]}(x)=\left\lbrace\begin{array}{cc}
                                                              \dfrac{1}{\frac{1}{\beta}+\frac{1}{\beta^3}}&x\in[0,\frac{1}{\beta})\\
\dfrac{1}{1+\frac{1}{\beta^2}}&x\in[\frac{1}{\beta},1]\\ 
0 & x\in[1,\frac{1}{\beta-1}]\end{array}\right. ,
\]
where $C$ is a normalising constant. Given $x\in I_{\beta}$ we generate a sequence $(x_n)$ known as the greedy $\beta$-expansion of $x$ by iterating $T$. Letting $x_n=0$ when $T^n(x)=\beta (T^{n-1}x)$ and $x_n=1$ when $T^n(x)=\beta (T^{n-1}x)-1$, we have that the sequence $(x_n)$ satisfies
\[
\sum_{n=1}^{\infty}x_n\beta^{-n}=x.
\]

We let $\Sigma:=\{0,1\}^{\mathbb N}$ and let
\[
X_{\beta}:=\overline{\left\{(x_n)\in\Sigma:(x_n)\text{ is a greedy }\beta\text{-expansion of some }x\in\left[0,\frac{1}{\beta-1}\right]\right\}}, 
\]
where $\overline X$ denotes the closure of $X$. We define the left shift $\sigma:\Sigma\to\Sigma$ by $\sigma(x_n)_{n=1}^{\infty}=(x_{n+1})_{n=1}^{\infty}$. We further define the lexicographical ordering $\prec$ on $\Sigma$ by declaring that $(x_n)\prec (y_n)$ if and only if $x_1<y_1$ or if there exists $n\in\mathbb N$ with $x_1\cdots x_{n-1}=y_1\cdots y_{n-1}$ and $x_n<y_n$. 

In \cite{Parry}, Parry characterised the set of $\beta$-expansions of $x\in I_{\beta}$ in terms of the orbit of the point $1$ under $T_{\beta}$. For $\beta$ equal to the golden mean we have that any sequence $(x_n)$ without two consecutive occurences of digit $1$ is in the closure of the set of greedy $\beta$-expansions of points $x\in[0,1)$. Points $x\in[1,\frac{1}{\beta-1})$ have greedy expansions which start with $m$ $1$s for some $m\geq 2$, but have that $\sigma^k(x_n)\prec(b_n)$ for each $k\geq m-1$.

Applying the ergodic theorem to $(I_{\beta},\mu,T)$ with the characteristic function on $[\frac{1}{\beta},\beta]$ gives the following theorem.
\begin{theorem}[Parry]
Let $\beta=\frac{1+\sqrt{5}}{2}$. For Lebesgue almost every $x\in I_{\beta}$ the frequency of the digit $1$ in the greedy $\beta$-expansion of $x$ is given by
\[
\alpha(1):=\mu\left[\frac{1}{\beta},1\right]=\frac{1}{\beta^2+1}\approx 0.27639 .
\]
\end{theorem}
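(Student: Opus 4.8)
The plan is to recognise the digit frequency as a Birkhoff average and then apply the ergodic theorem, exactly as indicated. First I would note that, directly from the definition of $T$, the $n$-th digit satisfies $x_n=1$ precisely when $T^{n-1}(x)$ lies in the right branch $[\frac{1}{\beta},\frac{1}{\beta-1}]$. Hence the proportion of $1$s among the first $N$ digits of the greedy expansion of $x$ is the ergodic average
\[
\frac{1}{N}\#\{1\le n\le N:x_n=1\}=\frac{1}{N}\sum_{n=0}^{N-1}\chi_{[\frac{1}{\beta},\frac{1}{\beta-1}]}(T^n x),
\]
so the frequency of the digit $1$ is exactly the limit of this average as $N\to\infty$, whenever that limit exists.

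Next I would invoke the Birkhoff ergodic theorem for the system $(I_{\beta},\mu,T)$. This requires ergodicity of $\mu$, which is the classical result of Renyi and Parry for the greedy $\beta$-transformation and may be cited. Granting it, for $\mu$-almost every $x$ the average above converges to
\[
\int_{I_{\beta}}\chi_{[\frac{1}{\beta},\frac{1}{\beta-1}]}\,d\mu=\mu\left[\frac{1}{\beta},\frac{1}{\beta-1}\right]=\mu\left[\frac{1}{\beta},1\right],
\]
the last equality holding because $\rho$ vanishes on $[1,\frac{1}{\beta-1}]$. Since the piecewise formula for $\rho$ is already a probability density, a short computation---using $1-\frac{1}{\beta}=\frac{1}{\beta^2}$, valid because $\beta^2=\beta+1$---evaluates this mass to $\frac{1/\beta^2}{1+1/\beta^2}=\frac{1}{\beta^2+1}$, as claimed.

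It remains to upgrade ``$\mu$-almost every'' to ``Lebesgue almost every $x\in I_{\beta}$'', and this is the step needing the most care, because $\mu$ is supported on the attractor $[0,1]$ and gives no mass to the overflow interval $(1,\frac{1}{\beta-1}]$. On $[0,1]$ the density $\rho$ is bounded above and below by positive constants, so $\mu$ and Lebesgue measure are equivalent there and the two notions of ``almost every'' coincide. For $x\in(1,\frac{1}{\beta-1}]$ I would use Parry's characterisation quoted above: the greedy expansion of such an $x$ opens with a finite block of $1$s, after which the orbit enters $[0,1)$, so that $T^k(x)\in[0,1)$ for some finite $k$. Deleting a finite prefix does not change a Cesaro limit, so the digit frequency of $x$ equals that of $T^k(x)$; and since $T$ is piecewise linear and expanding, the pushforward under $T^k$ of Lebesgue measure on $(1,\frac{1}{\beta-1}]$ is absolutely continuous, whence Lebesgue-almost every such $x$ is carried into the full-measure ``good'' subset of $[0,1)$ on which the frequency already equals $\frac{1}{\beta^2+1}$. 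Combining the two regions yields the conclusion for Lebesgue almost every $x\in I_{\beta}$.

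The main obstacle is the ergodicity input: once $\mu$ is known to be ergodic, everything else is essentially bookkeeping. If one preferred a self-contained argument to citing Renyi--Parry, proving ergodicity (or at least that the limiting frequency is a.e.\ constant) would be the substantial part of the work, with the transfer off the attractor described above being the only other genuinely non-automatic point.
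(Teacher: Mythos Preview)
Your proof is correct and follows exactly the approach the paper indicates in the line preceding the theorem, namely applying the Birkhoff ergodic theorem to $(I_{\beta},\mu,T)$ with the characteristic function of the interval on which the digit $1$ is produced. You supply considerably more detail than the paper's one-sentence sketch, in particular the passage from $\mu$-a.e.\ to Lebesgue-a.e.\ and the treatment of the overflow interval $(1,\frac{1}{\beta-1}]$, all of which is sound.
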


Indeed, it follows from standard techniques in multifractal analysis that, letting
\[
A_{\gamma}=\{x\in I_{\beta}: \text{ the greedy $\beta$-expansion of $x$ has digit $1$ with frequency } \gamma\},
\]
we have that $\dim_H(A_\gamma)<1$ for each $\gamma\neq\alpha(1)$\footnote{To prove this, first consider the corresponding symbolic set
\[
B_{\gamma}=\{(x_n)\in X_{\beta}: \lim_{n\to\infty}\frac{1}{n}\sum_{k=1}^n x_n=\gamma\}=\pi^{-1}(A_{\gamma}).
\]
For $\gamma\neq \alpha(1)$ we have that each measure supported on $B_{\gamma}$ has metric entropy strictly less than $\log(\beta)$, since $X_{\beta}$ has a unique measure of maximal entropy with entropy $\log(\beta)$ which is supported on $B_{\alpha(1)}$. Hence by the conditional variational principle for Markov maps (see \cite{TakensVerbitskiy,PfisterSullivan}), the topological entropy of $B_{\gamma}$ is strictly less than $\log(\beta)$. Then dividing by the lyapunov exponent $\log(\beta)$, we see that the Hausdorff dimension of $B_{\gamma}$ is strictly less than one. Finally we have that the Hausdorff dimension of $A_{\gamma}$ is less than one because Hausdorff dimension does not increase under the projection $\pi$.}.


Simple analysis reveals that $\nu_{\beta}$-almost every $x\in[0,1]$ has $\beta$-expansion in which the digit $1$ occurs with frequency different from $\alpha(1)$.

\begin{theorem}\label{MainTheorem}
Let $\beta=\frac{1+\sqrt{5}}{2}$. For $\nu_{\beta}$-almost every $x\in I_{\beta}$, the frequency of the digit $1$ in the greedy $\beta$-expansion of $x$ is equal to $\frac{5}{18}=0.2\dot 7 >\alpha(1)$.
\end{theorem}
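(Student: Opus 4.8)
The plan is to use the defining identity $\nu_{\beta}=m\circ\pi^{-1}$ to transfer the problem to the random source. Since $\nu_{\beta}$ is the pushforward of $m$, the assertion ``for $\nu_{\beta}$-almost every $x$ the greedy expansion has digit $1$ with frequency $\tfrac{5}{18}$'' is equivalent to ``for $m$-almost every $\underline a\in\Sigma$ the greedy expansion of $\pi(\underline a)$ has digit $1$ with frequency $\tfrac{5}{18}$''. So I would fix $\underline a$ chosen according to the fair coin measure $m$, form $x=\pi(\underline a)$, and study the greedy digits $(\epsilon_n)$ of $x$ in terms of the i.i.d.\ digits $(a_n)$. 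The link between the two expansions is golden-mean normalisation: since $\beta^{-1}+\beta^{-2}=1$, the rewriting rule $011\mapsto100$ preserves $\pi$, and iterating it turns any expansion into the unique expansion with no two consecutive $1$s, which is the greedy one. The theorem thus reduces to a purely probabilistic question: normalise an i.i.d.\ fair $\{0,1\}$-sequence to its no-$11$ form and compute the limiting density of $1$s.

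The heart of the argument is that this normalisation is governed by a finite-state process. Writing $\epsilon_1\cdots\epsilon_n$ for the greedy digits already committed, set
\[
c_n:=T^n x-\pi(\sigma^n\underline a)=\sum_{k=1}^{n}(a_k-\epsilon_k)\beta^{\,n-k}\in\mathbb{Z}[\beta].
\]
Because $T^n x\in I_\beta=[0,\beta]$ and $\pi(\sigma^n\underline a)\in[0,\beta]$, the real value of $c_n$ stays in $[-\beta,\beta]$; and because the Galois conjugate $\beta'=\tfrac{1-\sqrt5}{2}$ satisfies $|\beta'|<1$, the conjugate $c_n'=\sum_{k\le n}(a_k-\epsilon_k)(\beta')^{n-k}$ is bounded by $\sum_{j\ge0}|\beta'|^j=\beta^2$. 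An element of $\mathbb{Z}[\beta]$ bounded in both real embeddings can take only finitely many values, so $c_n$ ranges over a finite set $\mathcal{C}\subset\mathbb{Z}[\beta]$. The carry obeys the recursion $c_{n+1}=\beta c_n+a_{n+1}-\epsilon_{n+1}$, with greedy choice $\epsilon_{n+1}=1$ exactly when $c_n+\pi(\sigma^n\underline a)\ge\tfrac1\beta$. Hence reading the i.i.d.\ digits $(a_n)$ from the left drives a finite-state chain on $\mathcal{C}$ whose emitted symbols are precisely the greedy digits.

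I would then identify the stationary distribution of this chain and read off the answer. After checking irreducibility and aperiodicity, the strong law of large numbers for finite Markov chains gives, for $m$-almost every $\underline a$, that the frequency of emitted $1$s equals the stationary probability of the emitting transitions; solving the (small) linear system for the stationary vector yields $\tfrac{5}{18}$. Since $\tfrac{5}{18}>\alpha(1)=\tfrac{1}{\beta^2+1}$, comparison with the Lebesgue-almost-sure frequency (equivalently with the remark that $\dim_H(A_\gamma)<1$ for $\gamma\neq\alpha(1)$) re-proves the singularity of $\nu_{\beta}$.

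The main obstacle is setting up the finite-state description rigorously, not the final arithmetic. The difficulty is that greediness is not causal: the digit $\epsilon_{n+1}$ depends on $c_n$ together with the entire tail $\pi(\sigma^n\underline a)$, so $c_n$ is not a function of $a_1\cdots a_n$ alone and the process is not obviously Markov. The point to nail down is that, for $\beta$ the golden mean, the comparison deciding $\epsilon_{n+1}$ is resolved by a bounded look-ahead almost surely (the boundary case occurs on a $\nu_\beta$-null set), so that the pair consisting of the carry $c_n$ and a bounded window of future digits forms a genuine finite Markov chain driven by the coin flips. Establishing this finiteness and bounded memory --- equivalently, that the overlap structure of the self-similar measure $\nu_{\beta}$ closes up after finitely many steps --- is where the real work lies; once it is in place, the stationary computation giving $\tfrac{5}{18}$ is routine linear algebra.
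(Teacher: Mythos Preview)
Your reduction to the $m$-almost-sure statement and the idea of tracking a carry $c_n\in\mathbb Z[\beta]$ taking finitely many values (via the Pisot/Garsia bound on both embeddings) is sound, and this transducer route can be pushed through to give $\tfrac{5}{18}$. But the step you yourself flag as the main obstacle is genuinely broken as stated: the look-ahead needed to decide $\epsilon_{n+1}$ from $c_n$ is \emph{not} bounded. Already at $c_n=0$ one has $\epsilon_{n+1}=1$ iff $\pi(\sigma^n\underline a)\ge\beta^{-1}$, and this holds iff $a_{n+1}=1$, or $a_{n+1}a_{n+2}=01$ and $\pi(\sigma^{n+2}\underline a)\ge\beta^{-1}$; iterating, on an input beginning $(01)^k$ you must read at least $2k$ symbols before the comparison resolves. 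The look-ahead is almost surely finite but has geometric (hence unbounded) support, so the pair $(c_n,\text{bounded window of future }a\text{'s})$ does \emph{not} form a finite Markov chain. The null set you mention (where $T^nx=\beta^{-1}$ exactly) is not the issue; the issue is that away from it the resolution time is still unbounded. A correct repair is to enlarge the state to record the \emph{shape} of the pending undecided block (here always $(01)^k$ or $(01)^k0$, so finitely many shapes even though $k$ is unbounded), or equivalently to work with returns to the renewal event $a_na_{n+1}=00$, which resets the carry.

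The paper takes a different and shorter route that sidesteps the causality problem entirely. It extends $(a_n)_{n\ge1}$ to a two-sided i.i.d.\ sequence, uses that normalisation factorises across occurrences of the word $00$, and applies Birkhoff for the full Bernoulli shift to reduce the limiting frequency to the single number $m\{(a_n):(P(a))_0=1\}$. This is then computed by hand: normalise the past $(a_n)_{n\le0}$ and the future $(a_n)_{n\ge1}$ separately (two short cylinder-set sums giving $\mathbb P(y_0=1)=\tfrac13$, $\mathbb P(y_{-1}=y_0=0)=\tfrac12$, $\mathbb P(y_1=0)=\tfrac13$, $\mathbb P(y_1=y_2=1)=\tfrac13$), and check by cases how the two normalised halves interact at position $0$ to obtain $\tfrac13\cdot\tfrac13+\tfrac12\cdot\tfrac13=\tfrac{5}{18}$. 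No stationary distribution is solved for; the two-sided extension makes the problem stationary from the start and replaces your transducer analysis by three explicit combinatorial lemmas. Your approach, once the state space is fixed as above, recovers the same number via the stationary law of a small hidden-Markov chain and has the advantage of generalising more mechanically to other Pisot $\beta$.
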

We prove this theorem in the next section. An immediate corollary is that $\nu_{\beta}$ is singular with Hausdorff dimension
\[
\dim_H(\nu_{\beta})\leq\dim_H(A_{\frac{5}{18}})<1.
\]

\section{Normalising $\beta$-expansions}
We define a map $P:\Sigma\to X_{\beta}$ by $P(a_n)=(x_n)$ where $x_n$ is the greedy expansion of $\sum_{i=1}^{\infty}a_i\beta^{-i}$. This process is called normalising and has been studied, for example, in \cite{DanielFrougny,SidorovVershik}.

\begin{lemma}[Sidorov, Vershik \cite{SidorovVershik}]
Given $(a_n)\in \Sigma$, the sequence $P(a_n)$ can be found as the limit of the sequences obtained by iterating the following algorithm. First look for the first occurence in $(a_n)$ of the word $011$. If such a word occurs, replace it with $100$ and then repeat from the start, looking for the first occurence of the word $011$ in our new sequence.
\end{lemma}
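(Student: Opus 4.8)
The plan rests on the defining relation of the golden mean, $\beta^2=\beta+1$, equivalently $\beta^{-1}+\beta^{-2}=1$. First I would record the one computation that makes the whole algorithm work: if a sequence contains the word $011$ starting at position $k$, then that block contributes $\beta^{-(k+1)}+\beta^{-(k+2)}$ to the value of $\pi$, whereas the replacement block $100$ contributes $\beta^{-k}$, and these agree since $\beta^{-(k+1)}+\beta^{-(k+2)}=\beta^{-k}(\beta^{-1}+\beta^{-2})=\beta^{-k}$. Hence every application of the substitution leaves $\pi$ of the sequence unchanged, so each iterate is a $\beta$-expansion of the same number $x:=\pi(a)$. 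I would also note for later use that, because $100\succ 011$, replacing the leftmost $011$ at a position $k$ fixes the digits in positions $1,\dots,k-1$ and strictly raises the digit in position $k$ from $0$ to $1$.

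Next I would prove that the iterates converge coordinatewise. Fix a length $L$ and track the prefix of length $L$ of the current sequence. A substitution at a position $k>L$ does not touch this prefix, while a substitution at a position $k\le L$ leaves positions $1,\dots,k-1$ fixed and raises position $k$, hence strictly increases the prefix in the lexicographic order $\prec$. Since the prefix ranges over the finite set $\{0,1\}^L$, it can strictly increase only finitely often; therefore only finitely many substitutions ever occur at positions $\le L$, and the digits in positions $1,\dots,L$ are eventually frozen. As $L$ is arbitrary the iterates converge coordinatewise to a limit $s^\ast$, and since $\pi$ is continuous for the product topology and constant along the orbit, $\pi(s^\ast)=x$.

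I would then identify $s^\ast$ with the greedy expansion. The limit contains no occurrence of the word $011$: if it did, say at position $k$, then for all large iterates the frozen prefix would already contain this $011$, forcing the leftmost $011$ to sit at a position $\le k$ and hence forcing infinitely many substitutions at positions $\le k$, contradicting the previous paragraph. Thus $s^\ast$ is an admissible element of $X_\beta$ representing $x$, and by Parry's characterization of admissible sequences quoted above this pins it down as the greedy expansion, giving $s^\ast=P(a)$.

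The main obstacle is this last identification rather than the convergence. The substitution can cascade leftward, since resolving $011$ at position $k$ may create a new $011$ at position $k-2$, so one must be certain that the first-occurrence rule genuinely sends the active position to infinity; the finite-prefix monotonicity of the second paragraph is exactly what controls this. More delicate is confirming that the admissible limit is the greedy (lexicographically largest) expansion and not merely the quasi-greedy one: at the countable set of $x$ admitting two admissible expansions, namely those whose greedy expansion terminates in $0^\infty$, the algorithm instead returns the expansion ending in $(10)^\infty$ (for instance the input $0111\cdots$, with value $1$, converges to $(10)^\infty$ rather than to the greedy $110^\infty$). The equality $s^\ast=P(a)$ must therefore be understood modulo this measure-zero ambiguity, which is harmless for the almost-everywhere frequency statements that follow.
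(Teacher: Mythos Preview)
Your argument is correct and follows the same underlying idea as the paper's proof --- namely, that the golden-mean identity $\beta^{-k}=\beta^{-(k+1)}+\beta^{-(k+2)}$ makes each substitution $\pi$-preserving, and that the limit contains no block $011$ (equivalently, no $11$ except possibly an initial run), hence lies in $X_\beta$. The paper's proof stops there; it does not actually argue that the iterates converge, nor does it address the quasi-greedy ambiguity.

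Your proof adds two things the paper omits. First, the convergence argument via lexicographic monotonicity of finite prefixes is a genuine contribution: the paper simply asserts a limit exists, whereas you show why the leftward cascades cannot recur infinitely often within any fixed window. Second, your observation about the quasi-greedy expansion is a real subtlety that the paper's proof glosses over. Your example is correct --- the algorithm applied to $0111\cdots$ yields $(10)^\infty$ rather than the true greedy expansion $110^\infty$ of $x=1$ --- so the lemma as literally stated (with $P$ defined as the greedy expansion) fails on a countable set, and your measure-zero caveat is the right way to handle it for the purposes of the paper. In short, your route is the same as the paper's but strictly more careful.
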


\begin{proof}
Because the golden mean satisfies the equation
\[
\beta^{-n}=\beta^{-(n+1)}+\beta^{-(n+2)}
\]
for any $n\in\mathbb N$ we see that the above algorithm produces another $\beta$-expansion of the same point. Furthermore, the sequence $P(a_n)$ will have no two consecutive occurrences of digit $1$, except that it may start with word $1^m$, and thus $\sigma^kP(a_n)\prec (b_n)$ for all $k\geq m$ giving that $P(a_n)$ is a greedy sequence. 
\end{proof}

We extend the definition of $P$ to almost all two-sided sequences $(a_n)\in\{0,1\}^{\mathbb Z}$ as follows. First we define $P$ on finite words $a_1\cdots a_n$ as in the above algorithm. For a sequence $(a_n)_{n=1}^{\infty}$ for which there exist infinitely many $n\in\mathbb N$ for which $a_n=a_{n+1}=0$, we can rewrite $(a_n)=w_1w_2\cdots$ where $w_i$ is a finite word which starts with $00$ for each $i\geq 2$. Then we observe that $P(a_n)=P(w_1)P(w_2)\cdots$, see \cite{SidorovVershik} for a proof. Finally, if $(a_n)_{n=-\infty}^{\infty}$ is a two sided sequence for which there exist infinitely many positive and negative $n$ with $a_n=a_{n+1}=0$, we can again write $(a_n)$ as a concatenation of words $w_i$ that start with $00$ and define $P(a_n)$ to be the concatenation of the normalised words $P(w_i)$.

We also observe that if one has a two sided sequence $(x_n)_{n=-\infty}^{\infty}$ and one defines $y^+=P((x_n)_{n=1}^{\infty})$ and $y^-=P((x_n)_{n=-\infty}^{0})$ then $P((x_n)_{n=-\infty}^{\infty})=P(y_-y^+)$.

Our strategy goes as follows. In order to find the expected asymptotic frequency of the digit $1$ in the sequence $P(x^+)$ for a randomly chosen $x^+=(x_n)_{n=1}^{\infty}$ we choose a random `past' $x^-=(x_n)_{n=-\infty}^{0}$, where the negative digits are chosen independently from $\{0,1\}$ with probability $\frac{1}{2}$, and instead normalise the two sided sequence $x^-x^+$. The two sided sequence $P(x^-x^+)$ agrees with $P(x^+)$ for all positive $n$ for which there exists $0<m<n$ with $x_m=x_{m+1}=0$, and thus has the same limiting frequency of the digit $1$ in the positive coordinates.

Since the probability that the $n$th term of $P(x^-x^+)$ is equal to $1$ is independent of $n$, we just need to calculate the probability that $(P(x^-x^+))_0=1$ and then use the ergodic theorem together with the shift map.

Now since $(\Sigma,m,\sigma)$ is ergodic, the ergodic theorem using the characteristic function on the set of sequences with $(P(x^-x^+))_0=1$ gives that the limiting frequency of the digit $1$ in sequences $P(a_n)$ is equal to
\[
m\{(a_n)\in\Sigma: (P(a_n))_0=1\}.
\]

In the next section we calculate the measure of this set, we introduce the shorthand
\[
\mathbb P(x_n=i):=m( a\in\Sigma:(P(a))_n=i\}
\]
and refer to this as the probability that $x_n$ is equal to $i$.

\subsection{Calculations}
In this section we make some simple computations which prove Theorem \ref{MainTheorem}.
\begin{lemma}
Let $P(x^+)=(y_n)_{n=1}^{\infty}$. Then the probability that $y_1=1$ is equal to $\frac{2}{3}$,and the probability that $y_1=y_2=1$ is equal to $\frac{1}{3}$.
\end{lemma}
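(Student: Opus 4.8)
The plan is to compute the two probabilities directly from the normalising algorithm, exploiting the fact that the only rewrite rule is $011\mapsto 100$ and that carries propagate only to the \emph{left}. The key observation is that whether $y_1=1$ depends on the digits $x_1,x_2,\dots$ through the chain of carries that can reach position $1$ from the right. Since the rule $011\mapsto 100$ moves a $1$ leftward past a $0$, a carry into position $1$ is generated whenever we encounter a block of the form $0(11)^k\cdots$ starting at position $1$, or more precisely whenever the greedy normalisation forces a $1$ into the first slot. I would first set up a small amount of notation for the events governing $y_1$ and $y_2$, and then identify the finitely many local configurations of the leading digits $x_1x_2x_3\cdots$ that determine these events.

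First I would determine $\mathbb P(y_1=1)$. I would condition on the leading digits of $x^+$ and track when a carry reaches position $1$. Because consecutive $1$s get pushed left, the cleanest approach is to introduce a Markov chain or a system of linear equations: let $p$ denote the probability that the normalisation of a sequence beginning at a given position produces a $1$ in that position, and exploit the renewal structure coming from the decomposition into blocks starting with $00$ described in the preceding section. The self-similarity of the problem (the tail $x_2x_3\cdots$ has the same law as $x_1x_2\cdots$) should yield a fixed-point equation for $p$ whose solution is $\frac{2}{3}$. Concretely, I expect a relation of the form $p = \tfrac12\cdot(\text{contribution when }x_1=1) + \tfrac12\cdot(\text{contribution when }x_1=0)$, where the $x_1=0$ branch still contributes a $1$ to position $1$ precisely when a carry arrives from the right, an event itself expressible in terms of $p$.

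Next, for $\mathbb P(y_1=y_2=1)$, I would note that in a greedy (golden-mean) expansion the word $11$ can only occur as a prefix, so $y_1=y_2=1$ already signals that we are looking at the special leading block. I would again condition on the first few digits and set up the analogous equations, using the result $\mathbb P(y_1=1)=\tfrac23$ just obtained as an input, to extract $\mathbb P(y_1=y_2=1)=\tfrac13$.

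The main obstacle I anticipate is making the carry analysis rigorous rather than heuristic: a single rewrite $011\mapsto 100$ can trigger a further rewrite immediately to its left, so carries can cascade an unbounded distance, and I must justify that the probability of an infinite cascade reaching position $1$ is captured correctly and that the limiting sequence $P(x^+)$ is well defined almost surely. The decomposition into blocks beginning with $00$, which occurs infinitely often with probability one, is exactly what tames this: it confines each cascade to a finite block and lets me replace the infinite problem by a convergent system of finitely many equations. Verifying that the fixed-point equations have the claimed unique solutions $\tfrac23$ and $\tfrac13$ is then the routine final computation.
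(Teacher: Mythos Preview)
Your recursive strategy is sound and does deliver $\tfrac{2}{3}$ and $\tfrac{1}{3}$, but it is a different route from the paper's. The paper argues by direct enumeration: it partitions $\{0,1\}^{\mathbb N}$ into the cylinders $A_k=[(01)^k1]$ and $B_k=[(01)^k00]$ for $k\ge 0$, uses the explicit identities $P((01)^k1)=10^{2k}$ and $P((01)^k00)=(01)^k00$ to see that $y_1=1$ exactly on $\bigcup_k A_k$, and sums the geometric series $\sum_{k\ge 0}\tfrac{1}{2}\cdot 4^{-k}=\tfrac{2}{3}$; likewise $\{y_1=y_2=1\}=\bigcup_k[1(01)^k1]$ gives $\tfrac{1}{3}$. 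Your approach instead conditions on the leading digits and exploits that the tail has the same law: conditioning on $x_1x_2$ one finds $p=\tfrac{1}{4}\cdot 0+\tfrac{1}{4}\cdot p+\tfrac{1}{4}+\tfrac{1}{4}$, hence $p=\tfrac{2}{3}$, and then $q=\tfrac{1}{2}p=\tfrac{1}{3}$. The paper's argument is entirely explicit and avoids any fixed-point reasoning; yours is cleaner once the recursion is identified and would generalise more easily to other substitution rules. One small correction to your outline: if you condition only on $x_1$, the $x_1=0$ branch contributes $q$ rather than something in $p$ alone (a carry reaches position $1$ precisely when $P(x_2x_3\cdots)$ begins with $11$), so the natural system is the coupled pair $p=\tfrac{1}{2}+\tfrac{1}{2}q$, $q=\tfrac{1}{2}p$; it is conditioning on two leading digits that decouples $p$ and lets you compute it first as you intended.
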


\begin{proof} The cylinder sets $A_k=[(01)^k1]$ and $B_k=[(01)^k00]$ for $k\geq 0$ partition $\{0,1\}^{\mathbb N}$. Then using that $P((01)^k1)=10^{2k}$ and $P((01)^k00)=(01)^k00$ we see that $y_1=1$ if and only if $x^+\in A_k$ for some $k\geq 0$. Thus the probability that  $y_1=1$ is given by
\[
\mathbb P(y_1=1)=\sum_{k=0}^{\infty}\frac{1}{2}.\frac{1}{4^k}=\frac{1}{2}\left(\frac{1}{1-\frac{1}{4}}\right)=\frac{2}{3}
\]
Similarly, we see that $y_1=y_2=1$ if and only if $x^+$ belongs to the cylinder set $[1(01)^k1]$ for some $k\geq 0$, which happens with probability
\[
\mathbb P(y_1=y_2=1)=\sum_{k=0}^{\infty}\frac{1}{4}.\frac{1}{4^k}=\frac{1}{4}\left(\frac{1}{1-\frac{1}{4}}\right)=\frac{1}{3}
\]
\end{proof}

\begin{lemma}
Let $P(x^-)=(y_n)_{n=-\infty}^{0}$. Then the probability that $y_0=1$ is equal to $\frac{1}{3}$, and the probability that $y_{-1}=y_0=0$ is equal to $\frac{1}{2}$.
\end{lemma}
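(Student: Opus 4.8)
The plan is to mirror the computation of the previous lemma, but to read the sequence from position $0$ backwards while exploiting the fact that the rewriting rule $011\mapsto 100$ only ever moves a $1$ to the \emph{left}, i.e. towards $-\infty$. The key structural observation is that, since carries always travel away from position $0$, the symbol $y_0$ can never be \emph{created} by the algorithm: an application of the rule sets the leftmost entry of its triple to $1$, and the only triples containing $0$ as a non-rightmost entry are $(0,1,2)$ and $(-1,0,1)$, both of which lie outside the domain of $x^-$. Hence $y_0=1$ forces $x_0=1$, and more generally $y_0$ is governed entirely by the maximal run of $1$s ending at position $0$.

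First I would record the normalisation of a pure run. Iterating the algorithm (or a short induction on $r$) shows that $P(0\,1^{r})$ ends in $01$ when $r$ is odd and in $00$ when $r$ is even, so its final digit is $1$ precisely when $r$ is odd. I would then argue that this conclusion is insensitive to the left context: any rule application touching position $0$ has $0$ as the rightmost entry of its triple, so the value at $0$ is fixed once the trailing run is resolved, and the carries leaving that run to the left never return. Since the digits $x_0,x_{-1},\dots$ are i.i.d. fair coin flips, this reduces $\mathbb P(y_0=1)$ to the probability that the trailing run of $1$s ending at $0$ is odd. Summing the geometric series over the disjoint suffix cylinders $[\,0\,1^{2j+1}\,]$ gives
\[
\mathbb P(y_0=1)=\sum_{j=0}^{\infty}\left(\tfrac12\right)^{2j+2}=\frac{1/4}{1-1/4}=\frac13 ,
\]
exactly as in the forward case.

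For the second statement I would avoid the joint law. Because $P(x^-)$ is greedy it contains no factor $11$, so $y_{-1}=1$ implies $y_0=0$ and therefore $\mathbb P(y_{-1}=y_0=0)=\mathbb P(y_0=0)-\mathbb P(y_{-1}=1)=\tfrac23-\mathbb P(y_{-1}=1)$. It then suffices to show $\mathbb P(y_{-1}=1)=\tfrac16$. When $x_0=1$ the trailing run has length at least one and the run analysis above forces $y_{-1}=0$; when $x_0=0$ the symbol at position $0$ is inert, so normalising $x^-$ coincides with normalising $(x_n)_{n\le -1}$ and appending this $0$. Applying the run characterisation at position $-1$ gives that $y_{-1}=1$ iff $x_0=0$ and the run of $1$s ending at $-1$ is odd, an event of probability $\tfrac12\cdot\tfrac13=\tfrac16$, whence $\mathbb P(y_{-1}=y_0=0)=\tfrac12$.

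The main obstacle is making rigorous the claim that $y_0$ and $y_{-1}$ depend only on the trailing run and are unaffected by the infinite left context; one must verify that no carry produced to the left of the trailing run can propagate back rightwards. The cleanest way to secure this is to invoke the block decomposition $(a_n)=w_1w_2\cdots$ into words beginning with $00$, already used in the definition of $P$ on two-sided sequences: this guarantees that the final block containing position $0$ is normalised independently of everything before it, so the entire argument reduces to the finite computations above.
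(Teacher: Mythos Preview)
Your argument is correct and rests on the same trailing-run decomposition as the paper: both proofs exploit that carries in the $011\mapsto 100$ rewriting travel only leftward, so $y_0$ (and, when $x_0=0$, also $y_{-1}$) is determined by the length of the run of $1$s terminating at position $0$. The computation of $\mathbb P(y_0=1)$ is identical to the paper's. For $\mathbb P(y_{-1}=y_0=0)$ the paper proceeds by direct enumeration of the suffix cylinders on which the event holds---runs of even length $\geq 2$ at position $0$, together with $x_0=0$ followed by an even run at position $-1$---summing to $\tfrac16+\tfrac13$. You instead use the absence of the factor $11$ in a left-infinite normalised sequence to write $\mathbb P(y_{-1}=y_0=0)=\mathbb P(y_0=0)-\mathbb P(y_{-1}=1)$ and compute the subtrahend via a case split on $x_0$. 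This is a minor but clean reorganisation; it trades one cylinder enumeration for the structural fact that no $11$ can appear at the right end of $P(x^-)$, which you correctly justify through the block decomposition.
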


\begin{proof} The cylinder sets $A_k:=_{-2k}[01^{2k}]_0$ and $B_k:=_{-2k-1}[01^{2k+1}]_0$ for $k\geq 0$ partition the set $\{(x_n)_{n=-\infty}^0:x_i\in\{0,1\}\}$. We use the identities $P(01^{2k})=(10)^k0$ and $P(01^{2k+1})=(10)^k01$. Then we see that $y_0=1$ if and only if $x^{-}\in B_k$ for some $k\geq 0$. This happens with probability
\[
\mathbb P(y_0=1)=\sum_{k=0}^{\infty}\frac{1}{4}\left(\frac{1}{4}\right)^k=\frac{1}{3}
\]
Furthermore we see that $y_0=y_{-1}=0$ if and only if $x_{-2k}\cdots x_0=01^{2k}$ for some $k\geq 1$ or $x_{-2k-1}\cdots x_0=01^{2k}0$ for some $k\geq 0$. This happens with probability
\[
\mathbb P(y_{-1}=y_0=0)=\left(\sum_{k=1}^{\infty}\frac{1}{2}\left(\frac{1}{4}\right)^k\right)+\left(\sum_{k=0}^{\infty}\left(\frac{1}{4}\right)^{k+1}\right)=\frac{1}{6}+\frac{1}{3}=\frac{1}{2}
\]
\end{proof}

Finally, we study $P(y^-y^+)$.
\begin{lemma}
Letting $(x_n)_{n=-\infty}^{\infty}=P(y^-y^+)$ we have that $x_0=1$ if and only if one of the following two conditions holds.

{\bf Case 1:} $y_0=1$, $y_{1}=0$.

{\bf Case 2:} $y_{-1}=y_0=0$, $y_{1}=y_{2}=1$.

Thus the probability that $x_0=1$ is equal to $\frac{5}{18}$.
\end{lemma}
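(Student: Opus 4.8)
The plan is to reduce the computation to the purely local behaviour of the rewriting rule $011\mapsto 100$ at the junction between the two already-normalised half-sequences $y^-$ (on coordinates $n\leq 0$) and $y^+$ (on coordinates $n\geq 1$), and then to read off the probability from the two preceding lemmas via $P(y^-y^+)=P(x^-x^+)$. The crucial structural facts are that $y^-=P(x^-)$, being normalised and infinite into the past, almost surely contains no occurrence of $11$ at all, while $y^+=P(x^+)$ contains no $11$ except possibly in an initial run. Since the rule $011\mapsto 100$ alters only the three matched coordinates and can create a new $11$ (hence a new candidate $011$) only immediately to the left of the matched block, never to its right, any renormalisation of $y^-y^+$ acts only at the two boundary windows $y_{-1}y_0y_1$ and $y_0y_1y_2$, with carries propagating leftward into $y^-$. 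This is the mechanism that will make $x_0$ depend only on the window $y_{-1}y_0y_1y_2$.

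The heart of the argument is a short case analysis on this window. First I would dispose of $y_0=1$: since $y^-$ has no $11$ we automatically have $y_{-1}=0$, so if $y_1=0$ no $011$ meets coordinate $0$ and $x_0=y_0=1$ (this is Case 1), whereas if $y_1=1$ then $y_{-1}y_0y_1=011\mapsto 100$ pushes the $1$ onto coordinate $-1$ and leaves $x_0=0$. When $y_0=0$, coordinate $0$ can acquire a $1$ only if a carry arrives from the right, which forces $y_0y_1y_2=011$, i.e.\ $y_1=y_2=1$; the replacement then deposits a $1$ at coordinate $0$. The delicate point is whether this $1$ survives: if $y_{-1}=1$ a further replacement $y_{-2}y_{-1}y_0=011\mapsto 100$ (with $y_{-2}=0$ forced) carries it on to coordinate $-2$ and resets $x_0=0$, while if $y_{-1}=0$ no further $011$ can involve coordinate $0$ and the $1$ is stable. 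This is precisely Case 2, and a check of the remaining windows shows they all leave $x_0=0$.

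With the two cases established and disjoint (one has $y_0=1$, the other $y_0=0$), the probability follows from independence: the constraints on $y_{-1},y_0$ depend only on $x^-$ and those on $y_1,y_2$ only on $x^+$, and $x^-,x^+$ are chosen independently. Using the previous two lemmas and $\mathbb P(y_1=0)=1-\tfrac23=\tfrac13$,
\[
\mathbb P(x_0=1)=\mathbb P(y_0=1)\,\mathbb P(y_1=0)+\mathbb P(y_{-1}=y_0=0)\,\mathbb P(y_1=y_2=1)=\frac13\cdot\frac13+\frac12\cdot\frac13=\frac{2}{18}+\frac{3}{18}=\frac{5}{18}.
\]

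I expect the main obstacle to be the rigorous justification that $x_0$ is genuinely a function of the finite window $y_{-1}y_0y_1y_2$, rather than of arbitrarily long-range carries. Concretely I must rule out that a long initial run of $1$s in $y^+$, or a cascade of replacements entering from the right, could affect coordinate $0$ in a way the window does not capture, and I must confirm that once the leftward carry has fixed coordinate $0$ it never returns to it. The cleanest way to handle this is to invoke the confluence of the $011\mapsto 100$ rewriting, so that the order of replacements is irrelevant and I may always resolve the boundary occurrence first, together with the block decomposition of $P$ on two-sided sequences recorded earlier, which localises the computation between consecutive occurrences of $00$. Verifying that these tools indeed pin down coordinate $0$ using only $y_{-1}y_0y_1y_2$, in particular that the internal renormalisation of a long leading run of $y^+$ occurs strictly to the right of coordinate $0$, is where the real care is needed.
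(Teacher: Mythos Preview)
Your proposal is correct and follows essentially the same route as the paper: a short case analysis on the window $y_{-1}y_0y_1y_2$ (you split first on $y_0$, the paper first on $y_1$, but the cases and conclusions are identical) followed by the same product computation $\tfrac13\cdot\tfrac13+\tfrac12\cdot\tfrac13=\tfrac{5}{18}$ using independence of $x^-$ and $x^+$. If anything, your explicit flagging of the locality issue (long initial runs in $y^+$, confluence of the rewriting, and the block decomposition between consecutive $00$'s) is more careful than the paper's own proof, which treats these points implicitly.
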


\begin{proof}
$P(y^-y^+)=(x_n)_{n=1}^{\infty}$. Since $y^-$ and $y^+$ are already normalised, we see that if $y_1=0$ then the concatenation $y^-y^+$ does not contain consecutive occurences of the digit $1$ and hence is normalised, giving $x_0=y_0$. Thus if $y_1=0$ then $x_0=1$ if and only if $y_0=1$.

Now suppose that $y_1=1$. If $y_0=1$ then, since $y^-$ is normalised we must have that $y_{-1}=0$. Then when we normalise $y^-y^+$ the $011$ in position $y_{-1}y_0y_{1}$ is replaced by $100$, giving that $x_0=0$.

We suppose that $y_1=1$ and $y_0=0$. If $y_2=0$ then $y^-y^+$ is normalised and $x_0=0$. Thus we assume that $y_2=1$. Then the first action of $P$ is to turn the $011$ in position $y_0y_1y_2$ into $100$. If $y_{-1}=1$ then subsequent substitutions of $011$ with $100$ will leave $x_n=0$. However if $y_{-1}=0$ then this sequence is normalised. 

Adding the relevant probabilities computed earlier, we see that
\begin{eqnarray*}
\mathbb P(x_0=1)&=&\mathbb P(y_0=1,y_{1}=0)+\mathbb P(y_{-1}=y_0=0,y_{1}=y_{2}=1)\\
&=& \frac{1}{3}.\frac{1}{3}+\frac{1}{2}.\frac{1}{3}\\
&=&\frac{5}{18}
\end{eqnarray*}\end{proof}
Then by the ergodic theorem, we observe that for $m-$almost every sequence $(a_n)\in\Sigma$, the sequence $P(a_n)$ has digit $1$ with limiting frequency $\frac{5}{18}$. Since $\nu_{\beta}=m\circ \pi^{-1}$ it follows that $\nu_{\beta}$ is supported on the set $A_{\frac{5}{18}}$.

\section{Further Questions and Comments}

It is straightforward to generalise our method to the so called Multinacci numbers, that is $\beta$ satisfying the equation
\[
\beta^n-\beta^{n-1}-\beta^{n-2}\cdots -1=0
\]
for some $n\geq 2$. 

{\bf Question 1:} Can one generalise the method in this article to all Pisot numbers $\beta\in(1,2)$? How about all algebraic $\beta\in(1,2)$?

We have not been able to extend our method beyond the multinacci case, but in this final section we give some ideas about how we believe such a generalisation might go for certain algebraic $\beta$.

We let $P_{\beta}$ denote the normalisation operator associated to $\beta$ which maps a sequence $(a_n)\in\Sigma$ to the corresponding greedy sequence in $X_{\beta}$. 

\begin{defn} We say that $P_{\beta}$ has the finite word property if there is a set of finite words $\Omega=\{\omega_i, i\in\mathbb N\}$ such that the cylinder sets $\{[\omega_i]:i\in\mathbb N\}$ partition $\Sigma$ and such that the following two properties hold.
\begin{enumerate}
 \item For a sequence $(a_n)\in\Sigma$ we can calculate $P_{\beta}(a_n)$ by writing $(a_n)$ as a concatenation of words from $\Omega$ and applying $P_{\beta}$ to each of these words, giving $P_{\beta}(\omega_{a_1}\omega_{a_2}\cdots)=P_{\beta}(\omega_{a_1})P_{\beta}(\omega_{a_2})\cdots$.
 \item The expected length of $\omega_i$, equal to $\sum_{i=1}^{\infty}|\omega_i|2^{-|\omega_i|}$, is finite.
\end{enumerate}
\end{defn}
For $\beta$ equal to the golden mean we could choose $\Omega$ to be the set of words which end with two consecutive zeros and which have no intermediate occurrence of two consecutive zeros.

{\bf Question 2:} Do there exist non-Pisot values of $\beta$ with the finite word property?

Given a value of $\beta$ for which $P_{\beta}$ has the finite words property one can write a computer program to calculate the frequency of $1$s in $P_{\beta}(a_n)$ for typical $(a_n)\in\Sigma$. This value is equal to
\[
\sum_{i=1}^{\infty} \frac{|\omega_i|2^{-|\omega_i|}\times \text{ frequency of digit $1$ in $\omega_i$}}{\sum_{i=1}^{\infty}|\omega_i|2^{-|\omega_i|}},
\]
where the sum converges because of the finiteness of $\sum_{i=1}^{\infty}|\omega_i|2^{-|\omega_i|}$.

The so called Garsia separation property, which is satisfied when $\beta$ is a Pisot number, implies that $P_{\beta}$ has the finite words property. Thus, given any Pisot number, one could approximate the corresponding digit frequencies. We do not know of any non-computational way of calculating digit frequencies for general Pisot numbers $\beta$. 

It seems quite possible that other algebraic values of $\beta$ may also have the finite words property and would be very interesting to attempt to calculate digit frequencies in such cases.

\section*{Acknowledgements}
Many thanks to Karma Dajani and Vaughn Climenhaga for many interesting and helpful discussions. This work was supported by the Dutch Organisation for Scientific Research (NWO) grant number 613.001.022. 

\bibliographystyle{plain} 
\bibliography{betaref.bib}

\begin{thebibliography}{10}

\bibitem{AlexanderZagier}
J.~C. Alexander and Don Zagier.
\newblock The entropy of a certain infinitely convolved {B}ernoulli measure.
\newblock {\em J. London Math. Soc. (2)}, 44(1):121--134, 1991.

\bibitem{DanielFrougny}
Daniel Berend and Christiane Frougny.
\newblock Computability by finite automata and {P}isot bases.
\newblock {\em Math. Systems Theory}, 27(3):275--282, 1994.

\bibitem{ErdosPisot}
Paul Erd{\H{o}}s.
\newblock On a family of symmetric {B}ernoulli convolutions.
\newblock {\em Amer. J. Math.}, 61:974--976, 1939.

\bibitem{GarsiaAC}
Adriano~M. Garsia.
\newblock Arithmetic properties of {B}ernoulli convolutions.
\newblock {\em Trans. Amer. Math. Soc.}, 102:409--432, 1962.

\bibitem{HuTian}
Tian-You Hu.
\newblock The local dimensions of the {B}ernoulli convolution associated with
  the golden number.
\newblock {\em Trans. Amer. Math. Soc.}, 349(7):2917--2940, 1997.

\bibitem{LedrappierPorzio1}
Fran{\c{c}}ois Ledrappier and Anna Porzio.
\newblock On the multifractal analysis of {B}ernoulli convolutions. {I}.
  {L}arge-deviation results.
\newblock {\em J. Statist. Phys.}, 82(1-2):367--395, 1996.

\bibitem{LedrappierPorzio2}
Fran{\c{c}}ois Ledrappier and Anna Porzio.
\newblock On the multifractal analysis of {B}ernoulli convolutions. {II}.
  {D}imensions.
\newblock {\em J. Statist. Phys.}, 82(1-2):397--420, 1996.

\bibitem{OlivierSidorovThomas}
Eric Olivier, Nikita Sidorov, and Alain Thomas.
\newblock On the {G}ibbs properties of {B}ernoulli convolutions related to
  {$\beta$}-numeration in multinacci bases.
\newblock {\em Monatsh. Math.}, 145(2):145--174, 2005.

\bibitem{Parry}
W.~Parry.
\newblock On the {$\beta $}-expansions of real numbers.
\newblock {\em Acta Math. Acad. Sci. Hungar.}, 11:401--416, 1960.

\bibitem{PfisterSullivan}
C.-E. Pfister and W.~G. Sullivan.
\newblock On the topological entropy of saturated sets.
\newblock {\em Ergodic Theory Dynam. Systems}, 27(3):929--956, 2007.

\bibitem{Renyi}
A.~R{\'e}nyi.
\newblock Representations for real numbers and their ergodic properties.
\newblock {\em Acta Math. Acad. Sci. Hungar}, 8:477--493, 1957.

\bibitem{SidorovVershik}
Nikita Sidorov and Anatoly Vershik.
\newblock Ergodic properties of the {E}rd{\H o}s measure, the entropy of the
  golden shift, and related problems.
\newblock {\em Monatsh. Math.}, 126(3):215--261, 1998.

\bibitem{SolomyakAC}
Boris Solomyak.
\newblock On the random series {$\sum\pm\lambda^n$} (an {E}rd{\H o}s problem).
\newblock {\em Ann. of Math. (2)}, 142(3):611--625, 1995.

\bibitem{TakensVerbitskiy}
Floris Takens and Evgeny Verbitskiy.
\newblock On the variational principle for the topological entropy of certain
  non-compact sets.
\newblock {\em Ergodic Theory Dynam. Systems}, 23(1):317--348, 2003.

\end{thebibliography}

\end{document}